\documentclass[11pt,a4paper,fullpage]{article}
\usepackage{amsmath,amsthm}
\usepackage{amssymb}

\newtheorem{theorem}{Theorem}
\newtheorem{proposition}[theorem]{Proposition}
\newtheorem{lemma}[theorem]{Lemma}

\pagestyle{myheadings}
\markboth{}{An asymptotic on the logarithms of relative class numbers}

\title{{\bf An asymptotic on the logarithms of the relative class numbers of imaginary abelian number fields 
of prime conductor and large degree}}
\author{
St\'ephane R. LOUBOUTIN\\
Aix Marseille Universit\'e, CNRS, I2M,\\ 
Marseille, France\\
stephane.louboutin@univ-amu.fr}
\date{\today}

\textwidth 15.8cm
\textheight 24cm
\oddsidemargin -0.5cm
\topmargin -0.5cm

\begin{document}
\bibliographystyle{alpha}
\maketitle

\footnotetext{
2010 Mathematics Subject Classification. 
Primary. 11R20, 11R29, 11R42.

Key words and phrases. 
Dirichlet character. $L$-function. Relative class number. Cyclotomic field.}

\begin{abstract}
An asymptotic on the logarithms of the relative class numbers 
of the cyclotomic number fields of prime conductors $p$ is known. 
Here we give an asymptotic on the logarithms of the relative class numbers of
the imaginary abelian number fields of prime conductors $p$ and large degrees $m =(p-1)/d$ 
with $\phi(d)=o(\log p)$.
We also show that this asymptotic does not hold true anymore 
under the only slightly weaker restriction $\phi(d)=O(\log p)$.
\end{abstract}

\section{Introduction}
We refer the reader to \cite[Chapters 3, 4 and 11]{Was} for more background details. 
Let ${\mathbb K}$ be an imaginary abelian number field of degree $m$ and prime conductor $p\geq 3$, 
i.e. let ${\mathbb K}$ be an imaginary subfield of a cyclotomic number field ${\mathbb Q}(\zeta_p)$ 
(Kronecker-Weber's theorem). 
Hence $m$ is even and $d=(p-1)/m$ is odd.
Its Hasse unit index $Q_{\mathbb K}$ is equal to $1$ 
(see e.g. \cite[Example 5, page 352]{Lem}). 
Let $w_{\mathbb K}$ be its number of complex roots of unity. 
Then $w_{\mathbb K}=2p$ if ${\mathbb K}={\mathbb Q}(\zeta_p)$ 
but $w_{\mathbb K}=2$ if ${\mathbb K}\varsubsetneq {\mathbb Q}(\zeta_p)$ (see \cite[Exercise 2.3]{Was}).
Let ${\mathbb K}^+$ be the maximal real subfield of ${\mathbb K}$ of degree $m/2$ fixed by the complex conjugation. 
The class number $h_{{\mathbb K}^+}$ of ${\mathbb K}^+$ 
divides the class number $h_{\mathbb K}$ of ${\mathbb K}$ . 
The {\it relative class number} of ${\mathbb K}$ is defined by 
$h_{\mathbb K}^- =h_{\mathbb K}/h_{{\mathbb K}^+}$. 
For $\gcd (t,p)=1$, 
let $\sigma_t$ be the ${\mathbb Q}$-automorphism of ${\mathbb Q}(\zeta_p)$ 
defined by $\sigma_t(\zeta_p) =\zeta_p^t$. 
Then $t\mapsto\sigma_t$ a is canonical isomorphic 
from the multiplicative group $({\mathbb Z}/p{\mathbb Z})^*$ 
to the Galois group ${\rm Gal}({\mathbb Q}(\zeta_p)/{\mathbb Q})$. 
Set
$$H
:={\rm Gal}({\mathbb Q}(\zeta_p)/{\mathbb K}) 
=\{t\in ({\mathbb Z}/p{\mathbb Z})^*;\ \alpha\in {\mathbb K}\Rightarrow\sigma_t(\alpha)=\alpha\},$$ 
a subgroup of index $m$ and odd order $d=(p-1)/m$ of the cyclic group $({\mathbb Z}/p{\mathbb Z})^*$. 
Let $X_p$ denote the cyclic group of order $p-1$ of the Dirichlet characters modulo $p$. 
Let $X_p^-=\{\chi\in X_p;\ \chi(-1)=-1\}$ be the set of the $(p-1)/2$ odd Dirichlet characters modulo $p$.
Now, $-1\not\in H$ 
and we set 
$$X_p^-(H)
=\{\chi\in X_p^-;\ \chi_{/H}=1\}.$$ 
Then $\# X_p^-(H) =m/2$
and we have the relative class number formula
\begin{equation}\label{formulahrelK}
h_{\mathbb K}^-
=w_{\mathbb K}\left (\frac{p}{4\pi^2}\right )^{m/4}\prod_{\chi\in X_p^-(H)} L(1,\chi).
\end{equation}
Using \eqref{formulahrelK} and the arithmetic-geometric mean inequality, 
we obtain 
\begin{equation}\label{hrelK}
h_{\mathbb K}^-
\leq w_{\mathbb K}\left (\frac{pM(p,H)}{4\pi^2}\right )^{m/4},
\end{equation}
where $M(p,H)$ is the mean square value
$$M(p,H)
:=\frac{1}{\# X_p^-(H) }\sum_{\chi\in X_p^-(H)}\vert L(1,\chi)\vert^2.$$ 
These mean square values have been studied by using Dedekind sums $s(h,p)\in {\mathbb Q}$:

\begin{proposition}\label{MpHn}
(See 
\cite[Proposition 2 and Theorem 6]{LouBKMS56}).
Let $d\geq 1$ be an odd divisor of $p-1$, 
where $p\geq 3$ is an odd prime number. 
Let $H_d$ be the only subgroup of order $d$ 
of the multiplicative cyclic group $({\mathbb Z}/p{\mathbb Z})^*$.
Then
$$M(p,H_d)
=\frac{\pi^2}{6}
\left (
1
+\frac{N(H_d,p)}{p}
\right ),$$
where 
$$N(H_d,p)
:=12\sum_{h\in H_d}s(h,p) -p\in {\mathbb Q}.$$
Moreover, for $d>1$ the rational number $N(H_d,p) $ is an odd rational integer.
\end{proposition}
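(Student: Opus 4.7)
The approach is to reduce $M(p,H_d)$ to a sum of Dedekind sums via character orthogonality. I would begin from the closed form
$$L(1,\chi)=-\frac{\pi i\,\tau(\chi)}{p^{2}}\sum_{a=1}^{p-1}\bar\chi(a)\,a,$$
valid for odd primitive $\chi$ modulo $p$; together with $|\tau(\chi)|^{2}=p$ this gives
$$|L(1,\chi)|^{2}=\frac{\pi^{2}}{p^{3}}\sum_{a,b=1}^{p-1}\bar\chi(a)\chi(b)\,ab.$$
Summing over $\chi\in X_p^-(H_d)$, the inner character sum $\sum_{\chi}\chi(ba^{-1})$ equals $m/2$ when $ba^{-1}\in H_d$, $-m/2$ when $ba^{-1}\in -H_d$, and $0$ otherwise. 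The two nonzero cases are disjoint precisely because $d$ odd forces $-1\notin H_d$, which is the structural use of the hypothesis.

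Writing $\{x\}_p$ for the representative of $x\bmod p$ in $\{1,\dots,p-1\}$ and using $\{-ha\}_p=p-\{ha\}_p$, the orthogonality step collapses the double sum to
$$\sum_{\chi\in X_p^-(H_d)}|L(1,\chi)|^{2}=\frac{\pi^{2}m}{p^{3}}\sum_{h\in H_d}\sum_{a=1}^{p-1}a\bigl(\{ha\}_p-p/2\bigr).$$
Expanding $s(h,p)=\sum_a((a/p))((ha/p))$ via the sawtooth gives the identity $\sum_{a=1}^{p-1}a\{ha\}_p=p^{2}s(h,p)+p^{2}(p-1)/4$, which converts the right-hand side into $\sum_{h\in H_d}s(h,p)$. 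Dividing by $\#X_p^-(H_d)=m/2$ and simplifying yields $M(p,H_d)=(2\pi^{2}/p)\sum_{h\in H_d}s(h,p)$, which is algebraically the stated formula $(\pi^{2}/6)(1+N(H_d,p)/p)$.

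The delicate point, which I expect to be the main obstacle, is the integrality (and odd parity) of $N(H_d,p)$ for $d>1$. Rademacher's integrality theorem gives $12p\,s(h,p)\in\mathbb Z$ for $\gcd(h,p)=1$, so a priori only $p\,N(H_d,p)\in\mathbb Z$. Lifting this to $N(H_d,p)\in\mathbb Z$ calls for a congruence of the form $12p\,s(h,p)\equiv P(h,h^{-1})\pmod p$ with $P$ a low-degree polynomial, combined with the observations that $H_d$ is closed under inversion and that $\sum_{h\in H_d}h\equiv 0\pmod p$ for $d>1$ (a geometric-series identity in the nontrivial cyclic subgroup $H_d\subset\mathbb F_p^{*}$). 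The odd-parity assertion would come from a parallel mod-$2$ analysis using $s(-h,p)=-s(h,p)$ and the oddness of $d$.
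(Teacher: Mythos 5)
Note first that the paper does not prove this proposition at all: it is quoted verbatim from \cite[Proposition 2 and Theorem 6]{LouBKMS56}, so the comparison is really with that reference. Your derivation of the mean-square formula is correct and is essentially the standard route taken there: writing $|L(1,\chi)|^2=\frac{\pi^2}{p^3}\sum_{a,b}\bar\chi(a)\chi(b)ab$, applying the orthogonality relation \eqref{orthogonality} (with $d$ odd guaranteeing $-1\notin H_d$, so the cases $ba^{-1}\in H_d$ and $-ba^{-1}\in H_d$ are disjoint), and using $\sum_{a=1}^{p-1}a\{ha\}_p=p^2s(h,p)+p^2(p-1)/4$ to get $M(p,H_d)=\frac{2\pi^2}{p}\sum_{h\in H_d}s(h,p)$, which is the stated identity. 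This part of your proposal is complete up to routine checking.

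The genuine gap is the final assertion, that $N(H_d,p)$ is an odd integer for $d>1$, which you only sketch. The integrality half of your plan can indeed be completed: Dedekind reciprocity gives $12hp\,s(h,p)\equiv h^2+1\pmod p$ (since $6h\,s(p,h)\in\mathbb Z$), hence $12p\,s(h,p)\equiv h+h^{-1}\pmod p$, and $\sum_{h\in H_d}h\equiv 0\pmod p$ for $d>1$ because the elements of $H_d$ are the roots of $x^d-1$ over $\mathbb F_p$; combined with $6p\,s(h,p)\in\mathbb Z$ this yields $p\mid pN(H_d,p)$, i.e.\ $N(H_d,p)\in\mathbb Z$. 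But your proposed parity argument does not work: the identity $s(-h,p)=-s(h,p)$ pairs $h$ with $-h$, and since $-1\notin H_d$ this involution moves $H_d$ entirely outside itself, so it gives no information about the parity of $12\sum_{h\in H_d}s(h,p)$ (at best it recovers the trivial fact that the sum over $H_d\cup(-H_d)$ vanishes). The oddness of $N(H_d,p)$, equivalently that $12\sum_{h\in H_d}s(h,p)$ is an even integer, is a $2$-adic statement about Dedekind sums over subgroups and is precisely the nontrivial content of \cite[Theorem 6]{LouBKMS56}; it requires a finer analysis of the $2$-part (via the reciprocity chain or an explicit formula for $12p\,s(h,p)$ modulo $2p$), not just the sign relation and the oddness of $d$. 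As written, this claim remains unproved in your proposal.
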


Using Proposition \ref{MpHn} with $d=1$ and recalling that $s(1,p) =\frac{(p-1)(p-2)}{12p}$, 
we recover \cite{W}:
\begin{equation}\label{Mp1}
M(p,\{1\})
:=\frac{2}{p-1}\sum_{\chi\in X_p^-}\vert L(1,\chi)\vert^2
=\frac{\pi^2}{6}\left (1-\frac{1}{p}\right )\left (1-\frac{2}{p}\right )
\leq\frac{\pi^2}{6}
\ \ \ \ \ \hbox{($p\geq 3$).}
\end{equation}
Using \eqref{hrelK} we obtain the upper bound
\begin{equation}\label{boundhpminusQzetap}
h_{{\mathbb Q}(\zeta_p)}^-
\leq 2p\left (\frac{pM(p,\{1\})}{4\pi^2}\right )^{(p-1)/4}
\leq 2p\left (\frac{p}{24}\right )^{(p-1)/4}.
\end{equation}
The only other value of $d$ for which an explicit closed formula for $M(p,H_d)$ is known is $d=3$: 

\begin{proposition}\label{thp3}
(See \cite[Theorem 1]{LouBPASM64}).
Let $p\equiv 1\pmod 6$ be a prime number. 
Let $H_3$ be the only subgroup of order $3$ of the multiplicative cyclic group $({\mathbb Z}/p{\mathbb Z})^*$. 
Let ${\mathbb K}$ be the imaginary subfield of degree $(p-1)/3$ of the cyclotomic number field ${\mathbb Q}(\zeta_p)$.
Then 
\begin{equation}\label{boundhpminusK3}
M(p,H_3)
=\frac{\pi^2}{6}\left (1-\frac{1}{p}\right )
\leq\frac{\pi^2}{6}
\text{ and }
h_{\mathbb K}^-
\leq 2\left (\frac{p}{24}\right )^{(p-1)/12}
\end{equation}
(compare with \eqref{Mp1} and \eqref{boundhpminusQzetap}, 
and note the misprint in the exponent in \cite[(8)]{LouBPASM64}).
\end{proposition}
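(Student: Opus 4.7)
The plan is to specialize Proposition~\ref{MpHn} to $d=3$, which reduces the first assertion to computing $N(H_3,p)=12\sum_{h\in H_3}s(h,p)-p$. Writing $H_3=\{1,\omega,\omega^2\}$ for a primitive cube root of unity $\omega$ modulo $p$, I would exploit the inversion symmetry $s(h^{-1},p)=s(h,p)$ together with $\omega^{-1}\equiv\omega^2\pmod p$ to conclude $s(\omega^2,p)=s(\omega,p)$. Combined with the classical value $s(1,p)=(p-1)(p-2)/(12p)$, the identity $M(p,H_3)=\tfrac{\pi^2}{6}(1-1/p)$ is then equivalent to the single explicit Dedekind sum evaluation
\[
s(\omega,p)=\frac{p-1}{12p},\qquad \omega^2+\omega+1\equiv 0\pmod p.
\]

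This last identity is the crux of the argument, and proving it is the main obstacle. The most natural attack is via the Rademacher reciprocity formula
\[
s(\omega,p)+s(p,\omega)=-\tfrac14+\tfrac1{12}\Bigl(\tfrac{\omega}{p}+\tfrac{p}{\omega}+\tfrac{1}{p\omega}\Bigr),
\]
exploiting the fact that an integer representative $\omega\in\{1,\ldots,p-1\}$ satisfies $\omega^2+\omega+1=np$ for some positive integer $n$; this relation both simplifies the right-hand side and controls $p\bmod\omega$, so that iterating reciprocity mimics the Euclidean algorithm on the pair $(\omega,p)$ and terminates in a closed form. An alternative, which bypasses the messy integer arithmetic, would be to start from the identity $M(p,H_3)=\frac{1}{\#X_p^-(H_3)}\sum_{\chi\in X_p^-(H_3)}|L(1,\chi)|^2$ and expand each $|L(1,\chi)|^2$ via its cotangent/Hurwitz representation; after summing over the $(p-1)/6$ odd characters trivial on $H_3$, the orbit structure of multiplication by $\omega$ on $(\mathbb{Z}/p\mathbb{Z})^*$ should produce enough cancellation to leave only a residual term of size $1/p$, from which the exact value $-1$ for $N(H_3,p)$ can be read off.

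With $N(H_3,p)=-1$ in hand, the rest is essentially bookkeeping. One reads off $M(p,H_3)=\tfrac{\pi^2}{6}(1-1/p)\leq\tfrac{\pi^2}{6}$, and since $\mathbb{K}\subsetneq\mathbb{Q}(\zeta_p)$ one has $w_{\mathbb K}=2$, while $m/4=(p-1)/12$. Plugging these into \eqref{hrelK} gives
\[
h_{\mathbb K}^-\leq 2\Bigl(\frac{p\cdot\pi^2/6}{4\pi^2}\Bigr)^{(p-1)/12}=2\Bigl(\frac{p}{24}\Bigr)^{(p-1)/12},
\]
which is the second assertion of the proposition.
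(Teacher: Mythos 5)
Your reduction is sound and is in the spirit of the cited source (the paper itself gives no proof of this proposition; it is quoted from \cite[Theorem 1]{LouBPASM64}): Proposition \ref{MpHn} with $d=3$, the inversion symmetry $s(\omega^{2},p)=s(\omega^{-1},p)=s(\omega,p)$, the value $s(1,p)=\frac{(p-1)(p-2)}{12p}$, and the final bookkeeping ($w_{\mathbb K}=2$ because ${\mathbb K}\varsubsetneq{\mathbb Q}(\zeta_p)$, $m/4=(p-1)/12$, then \eqref{hrelK}) are all correct, and the statement is indeed equivalent to $s(\omega,p)=\frac{p-1}{12p}$ for $\omega$ of order $3$ modulo $p$. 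The gap is precisely at this crux, which you acknowledge but do not close. Your first proposed attack (Dedekind reciprocity iterated ``like the Euclidean algorithm'') does not work as described: reciprocity replaces $s(\omega,p)$ by $s(p\bmod\omega,\omega)$, and the relation $\omega^{2}+\omega+1=np$ only tells you $np\equiv 1\pmod\omega$, i.e. $p\equiv n^{-1}\pmod\omega$ with $n$ varying with $p$; so the remainder $p\bmod\omega$ is not controlled, the continued-fraction expansion of $p/\omega$ has unbounded length, and no uniform closed form drops out after finitely many steps. Your second alternative (expanding $|L(1,\chi)|^2$ and hoping for cancellation from the $\omega$-orbits) is essentially a re-derivation of Proposition \ref{MpHn} itself and is likewise only a hope, not an argument.

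The identity you need is true and has a short proof that closes the gap: use the cotangent representation $s(h,p)=\frac{1}{4p}\sum_{a=1}^{p-1}\cot\frac{\pi a}{p}\cot\frac{\pi ha}{p}$ and the elementary identity $\cot x\cot y+\cot y\cot z+\cot z\cot x=1$ valid when $x+y+z\equiv 0\pmod\pi$ and none of $x,y,z$ is a multiple of $\pi$. Since $a(1+\omega+\omega^{2})\equiv 0\pmod p$, applying this with $x=\frac{\pi a}{p}$, $y=\frac{\pi\omega a}{p}$, $z=\frac{\pi\omega^{2}a}{p}$ and summing over $a=1,\dots,p-1$ gives $p-1$ on one side, while the substitutions $a\mapsto\omega a$, $a\mapsto\omega^{2}a$ show each of the three resulting cotangent sums equals $4p\,s(\omega,p)$ (using $s(\omega^{-1},p)=s(\omega,p)$); hence $12p\,s(\omega,p)=p-1$, i.e. $s(\omega,p)=\frac{p-1}{12p}$, $N(H_3,p)=-1$, and $M(p,H_3)=\frac{\pi^2}{6}\bigl(1-\frac1p\bigr)$. (A numerical sanity check: $s(2,7)=\frac1{14}$, $s(3,13)=\frac1{13}$.) With this step supplied, the rest of your write-up yields the proposition exactly as stated, including $h_{\mathbb K}^-\leq 2\left(\frac{p}{24}\right)^{(p-1)/12}$.
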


By \cite[Theorem 1.1]{LMQJM}, we have the following generalization 
of \eqref{boundhpminusQzetap} and \eqref{boundhpminusK3}:
\begin{equation}\label{boundhpminusKd}
N(p,H_d)=o(1),
\ M(p,H_d)=\frac{\pi^2}{6}+o(1)
\text{ and } 
h_{\mathbb K}^-\leq 2\left (\frac{(1+o(1))p}{24}\right )^{m/4}
\end{equation}
as $p$ tends to infinity 
and ${\mathbb K}$ runs over the imaginary subfields of ${\mathbb Q}(\zeta_p)$ 
of large degrees $m=(p-1)/d$ with 
\begin{equation}\label{restriction1}
d\leq\frac{\log p}{3\log\log p}.
\end{equation} 
In \cite[Theorem 2.1]{MS} it is proved that \eqref{boundhpminusKd} still holds true under the weaker restriction 
\begin{equation}\label{restriction2}
\phi(d) =o(\log p).
\end{equation}
Restriction \eqref{restriction1} is almost optimal.
Indeed, if $p$ runs over the Mersenne primes $p=2^d-1$, $d\geq 3$ odd, 
and $H_d$ is the subgroup of order $d$ of $({\mathbb Z}/p{\mathbb Z})^*$ generated by $2$, 
then $d$ is prime and 
$$N(p,H_d)
=2p-(6d-3),
\ M(p,H_d) =\frac{\pi}{2}\left (1-\frac{2d-1}{p}\right)
\text{ and }
h_{\mathbb K}^-\leq 2\left (\frac{p}{8}\right )^{m/4}$$ 
by \cite[Theorem 5.4]{LMCJM}. 

Finally, for $d_0$ a square-free integer, 
let $\chi'$ denote the non-primitive Dirichlet modulo $d_0p$ induced by $\chi\in X_p^-$. 
By using estimation of the mean square value $M_{d_0}(p,H_d)$ of 
$\vert L(1,\chi')\vert^2$ for $\chi\in X_p^-(H)$, 
we improved on
\eqref{boundhpminusQzetap} in \cite[Th\'eor\`eme 6]{LouCMB36/37} 
and on \eqref{boundhpminusKd} in \cite[Theorem 1.1]{LMCJM}: 
for any given positive constant $C<4\pi^2$ and for $p\geq p_C$ large enough we have 
\begin{equation}\label{boundhpminusKdimproved}
k_{\mathbb K}^-
\leq w_{\mathbb K}\left (\frac{p}{C}\right )^{m/4}
\end{equation}
as ${\mathbb K}$ runs over the imaginary subfields of ${\mathbb Q}(\zeta_p)$ 
of large degrees $m=(p-1)/d$ with $d$ as in \eqref{restriction1}. 

In the present paper, we not simply improve on the upper bounds
\eqref{boundhpminusK3}, \eqref{boundhpminusKd} and \eqref{boundhpminusKdimproved}. 
With restrictions slightly lighter than \eqref{restriction1}, 
we prove an asymptotic behavior instead of an upper bound:

\noindent\frame{\vbox{
\begin{theorem}\label{mainth1}
As $p$ tends to infinity 
and ${\mathbb K}$ runs over the imaginary abelian number fields of conductor $p$ 
and large degrees $m=(p-1)/d$ with $d$ as in \eqref{restriction2}
we have an explicit asymptotic 
\begin{equation}\label{asymptotichKminus}
h_{\mathbb K}^- 
=w_{\mathbb K}\left (\frac{(1+o(1))p}{4\pi^2}\right )^{m/4}.
\end{equation}
Moreover, this asymptotic is fully explicit under the following stricter restriction 
\begin{equation}\label{restriction3}
d\leq\frac{\log p}{\log\log p}
\end{equation} 
\end{theorem}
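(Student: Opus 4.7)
The plan is to reduce the asymptotic \eqref{asymptotichKminus}, via the relative class number formula \eqref{formulahrelK}, to proving that
$$S := \sum_{\chi \in X_p^-(H)}\log L(1,\chi) = o(m),$$
where $S$ is real since $X_p^-(H)$ is closed under $\chi\mapsto\bar\chi$. The approach is through the Euler product. Working at $s=1+\varepsilon$ for absolute convergence, a direct computation using the cyclic structure of $X_p^-(H)$ (a set of $m/2$ characters identified with the odd residues in the cyclic quotient $G/H\cong\mathbb Z/m\mathbb Z$, where $G=(\mathbb Z/p\mathbb Z)^*$) yields, for any prime $q\ne p$, the explicit Euler factor
$$\prod_{\chi\in X_p^-(H)}(1-\chi(q)q^{-s}) = \begin{cases}(1-q^{-se})^{m/(2e)} & \text{if $e=e(q)$ is odd,}\\(1+q^{-se/2})^{m/e} & \text{if $e=e(q)$ is even,}\end{cases}$$
where $e(q)$ is the order of the coset $qH$ in $G/H$. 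Summing $-\log$ of this over primes $q\ne p$ and letting $s\to 1^+$ produces an explicit expression for $S$.

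The dominant contributions come from primes $q$ with $\bar q\in H$ (then $e(q)=1$, contributing $-(m/2)\log(1-1/q)$) and with $\bar q\in -H$ (then $e(q)=2$, contributing $-(m/2)\log(1+1/q)$); primes with $e(q)\geq 3$ contribute only $O(m/(e(q)\,q^{e(q)/2}))$ each. The arithmetic key is the following lemma: under $\phi(d)=o(\log p)$, every $h\in H\setminus\{1\}$ satisfies $h\geq p^{1/\phi(d)}-1$, and every $h\in -H\setminus\{-1\}$ satisfies the same bound. Indeed, an element $h\ne 1$ of $H$ has some multiplicative order $e\geq 2$ in $G$ with $e\mid d$, so $p\mid\Phi_e(h)$ and $p\leq|\Phi_e(h)|\leq(h+1)^{\phi(e)}\leq(h+1)^{\phi(d)}$; for $h\in -H\setminus\{-1\}$ the order $2e'$ of $h$ satisfies $e'\mid d$ with $e'\geq 2$, and since $d$ (hence $e'$) is odd, $\phi(2e')=\phi(e')\leq\phi(d)$, giving the same bound. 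As $\phi(d)=o(\log p)$ forces $p^{1/\phi(d)}\to\infty$, the small primes in $\pm H$ that would dominate $\sum 1/q$ simply do not exist.

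Combining this lemma with the Euler-factor computation, truncating at an appropriate cutoff $X$ and handling the tail via a P\'olya--Vinogradov-style character-sum bound, one arrives at $S=o(m)$. The main obstacle is matching the error bound with the hypothesis: a naive estimate gives $|S|=O(md/p^{1/\phi(d)})$, which is $o(m)$ only when $\phi(d)\log d=o(\log p)$. Under the stricter restriction \eqref{restriction3}, $d\leq\log p/\log\log p$ forces $\log d=O(\log\log p)$ and hence $\phi(d)\log d=o(\log p)$ automatically, so all constants become effective and the asymptotic is fully explicit. Under \eqref{restriction2} alone the bound is more delicate: one must extract further cancellation --- for instance by pairing the sums over $H$ and $-H$ (whose individual divergent parts cancel by Dirichlet density) or by refining the Dedekind-sum identity of Proposition \ref{MpHn}. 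I expect this final extension to be the technical crux of the proof.
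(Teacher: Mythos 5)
Your reduction to $\sum_{\chi\in X_p^-(H)}\log L(1,\chi)=o(m)$ and your Euler-factor computation at $s=1+\varepsilon$ are fine (the factor formula in terms of the order $e(q)$ of $qH$ in $G/H$ is correct, and it is just a repackaging of the paper's decomposition $f_H=f_{H,+1}-f_{H,-1}$ via the orthogonality relation \eqref{orthogonality}); likewise your cyclotomic-polynomial lower bound $n\geq p^{1/\phi(d)}-1$ for the residues of $\pm H$ below $p$ is the right elementary input for that range, in fact sharper in $\phi(d)$ than \eqref{step3}. The genuine gap is the passage from $\Re(s)>1$ to $s=1$, which you dispatch with ``letting $s\to 1^+$'' and ``handling the tail via a P\'olya--Vinogradov-style character-sum bound.'' This does not work: at $s=1$ the two prime sums over $q\equiv h$ and $q\equiv -h\pmod p$ diverge individually, and no character-sum estimate of P\'olya--Vinogradov type gives cancellation in sums weighted by $\Lambda(n)$; bounding the tail of $\sum_n\Lambda(n)\chi(n)/(n\log n)$ after a truncation is equivalent to zero-free-region or zero-density information for the $L(s,\chi)$ (a Siegel zero would make $\log L(1,\chi)$ very negative while any short truncated sum stays bounded). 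The paper supplies exactly this missing ingredient: it bounds $f_{H,\pm 1}(\sigma)$ only for $\sigma>1$, accepting the $-\log(\sigma-1)$ blow-up of Lemma \ref{boundfH(sigma)}, and then transfers to $s=1$ via the mean value theorem plus the Borel--Carath\'eodory lemma applied in the explicit zero-free disc $\vert s-1\vert<2p^{-3}$ borrowed from Masley--Montgomery (steps 1--5 of Subsection \ref{sectionboundfH(1)}); the referee's alternative argument replaces this by Montgomery's zero-density theorem and Granville--Soundararajan's Lemma 8.2, and even then only reaches \eqref{restriction3}. Your proposal has no substitute for this step, and if you were to obtain the needed cancellation between the progressions $h$ and $-h$ from Siegel--Walfisz, the constants would be ineffective, contradicting your claim of full explicitness under \eqref{restriction3}.

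The second gap concerns the main assertion itself, which is under \eqref{restriction2}, $\phi(d)=o(\log p)$: you concede you cannot reach it, since your small-prime estimate $O(md/p^{1/\phi(d)})$ only gives $o(m)$ when $\phi(d)\log d=o(\log p)$. The paper closes precisely this point by importing the Munsch--Shparlinski results: with $\vartheta(H',p)\gg p^{1/\phi(d)}$ and $\sum_{1\neq\lambda\in H'}\rho(\lambda,p)^{-1}\ll p^{-1/(10\phi(d))}+dp^{-1/4}$ one gets $\Sigma_1=o(m)$, i.e.\ \eqref{step3bis}. Your two suggested escape routes do not help here: the integers below $p$ lying in $H$ and in $-H$ need not pair up, so no $H$ versus $-H$ cancellation is available in that range, and the Dedekind-sum identity of Proposition \ref{MpHn} controls the mean square of $L(1,\chi)$, which yields upper bounds of the type \eqref{hrelK} but not the lower bound needed for an asymptotic. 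As written, the proposal therefore proves neither the first assertion of Theorem \ref{mainth1} nor, in an effective form, the second.
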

}}

Conjecturally, for a given even degree $m$ 
there are infinitely many imaginary number fields ${\mathbb K}$ 
of conductor $p$ and relative class number 
$h_{\mathbb K}^-\gg (\sqrt p\log\log p)^{m/2}$, 
see \cite{BCE} and \cite{Jo} for proofs of this conjecture for $m=2$.
Hence, we do need some restriction on $d$ for \eqref{asymptotichKminus} to hold true. 
The following asymptotic \eqref{asymptotichKminusMersenne} shows that 
\eqref{asymptotichKminus} does not hold true 
under the restriction $\phi(d)=O(\log p)$ only slightly weaker 
than restriction \eqref{restriction2}: 

\noindent\frame{\vbox{
\begin{theorem}\label{mainth2}
Let $p=2^d-1$ range over the (conjecturally infinitely many) Mersenne primes, 
which implies that $d$ is prime. 
Let ${\mathbb K}$ be the imaginary field of conductor $p$ 
and large degree $m=(p-1)/d$ $=2\cdot\frac{2^{d-1}-1}{d}$. 
We have an explicit asymptotic 
\begin{equation}\label{asymptotichKminusMersenne}
h_{\mathbb K}^- 
=w_{\mathbb K}\left (\frac{(1+o(1))p}{\pi^2}\right )^{m/4}.
\end{equation}
\end{theorem}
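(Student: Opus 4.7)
The plan is to take logarithms in \eqref{formulahrelK}; the target asymptotic \eqref{asymptotichKminusMersenne} is equivalent to
\begin{equation*}
S := \sum_{\chi \in X_p^-(H_d)} \log|L(1,\chi)| = \tfrac{m}{2}\log 2 + o(m).
\end{equation*}
Pairing complex-conjugate characters lets us replace $\log|L(1,\chi)|$ by the principal branch of $\log L(1,\chi)$, and expanding via the Euler product and interchanging summations gives
\begin{equation*}
S = \sum_{q\ne p\text{ prime}}\sum_{k\ge 1}\frac{1}{kq^k}\sum_{\chi \in X_p^-(H_d)}\chi(q^k).
\end{equation*}
The interchange is immediate for the absolutely convergent $k\ge 2$ piece; for $k=1$ one controls the tail using that the inner character sum is $O(m)$ together with a Siegel--Walfisz-type estimate for primes in arithmetic progressions modulo $p$.

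Next I would apply the orthogonality relation. Since $-1\notin H_d$,
\begin{equation*}
\sum_{\chi \in X_p^-(H_d)}\chi(a) = \tfrac{m}{2}\bigl(\mathbf{1}_{a\bmod p\,\in\, H_d} - \mathbf{1}_{-a\bmod p\,\in\, H_d}\bigr),
\end{equation*}
whence $S = \tfrac{m}{2}\sum_{q,k}\varepsilon(q^k)/(kq^k)$ with $\varepsilon(a)\in\{-1,0,+1\}$. For a Mersenne prime, $H_d=\langle 2\rangle\pmod p$, so $2^k\in H_d$ for every $k\ge 1$ while $-2^k\notin H_d$ (because $|H_d|=d$ is odd). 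Hence the prime $q=2$ contributes exactly $\tfrac{m}{2}\sum_{k\ge 1}1/(k\cdot 2^k)=\tfrac{m}{2}\log 2$, furnishing the required main term.

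The remaining task, and the main technical obstacle, is to show that the primes $q\ne 2,p$ contribute only $o(m)$ to $S$. Let $e_q$ denote the order of $q$ in the quotient $(\mathbb Z/p\mathbb Z)^*/H_d\langle-1\rangle$, which is cyclic of order $m/2$. Then $q^k\in H_d\cup(-H_d)\pmod p$ if and only if $e_q\mid k$, so the $q$-contribution is bounded in absolute value by $\tfrac{m}{2e_q}\log\bigl(q^{e_q}/(q^{e_q}-1)\bigr)\le m/\bigl(2e_q(q^{e_q}-1)\bigr)$. I would estimate the resulting sum $\sum_{q\ne 2,p}1/\bigl(e_q(q^{e_q}-1)\bigr)$ by a dyadic decomposition in $e=e_q$, using the elementary bound $\#\{q\le p:e_q=e\}\le 2de$ together with the observation that for any fixed prime $q\ne 2$, the congruence $q\equiv\pm 2^j\pmod{2^d-1}$ with $j$ small is impossible once $d$ is large, so that $e_q\to\infty$ for every fixed $q$. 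This Mersenne-specific estimate, which replaces the uniform-distribution input used for Theorem \ref{mainth1}, combined with the $q=2$ main term above, yields the claimed asymptotic.
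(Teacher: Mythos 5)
Your extraction of the main term is correct and is, in substance, the same device the paper uses: since $2$ generates $H$, the prime $q=2$ contributes $\frac m2\sum_{k\ge1}\frac{1}{k2^k}=\frac m2\log 2$ to $\sum_{\chi\in X_p^-(H)}\log L(1,\chi)$, which is exactly what turns $(p/4\pi^2)^{m/4}$ into $(p/\pi^2)^{m/4}$; the paper encodes this as $\chi(2)=+1$, hence $L(1,\chi)=2L(1,\chi')$ with $\chi'$ the induced character modulo $2p$, and then reruns the proof of Theorem \ref{mainth1} on $g_H(s)=\sum_\chi\log L(s,\chi')$.

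The gap is precisely at the point you label the main technical obstacle, the primes $q\ne 2,p$. Working directly at $s=1$ the rearranged double sum is only conditionally convergent, and your proposed absolute-value estimate cannot work: the sum $\sum_{q\ne 2,p}1/\bigl(e_q(q^{e_q}-1)\bigr)$ \emph{diverges}. Indeed, every prime $q\equiv\pm 2^j\pmod p$ has $e_q=1$ and contributes $1/(q-1)$; by Dirichlet there are infinitely many such primes, and the sum of $1/q$ over the primes in a fixed residue class modulo $p$ diverges. Your counting bound $\#\{q\le p:\ e_q=e\}\le 2de$ only addresses $q\le p$ (where an absolute-value argument is indeed fine, since any odd $q^{e_q}\equiv\pm2^j\pmod p$ with $q\ne 2$ forces $q^{e_q}\ge p-2^{d-1}$), but it says nothing about the tail $q>p$, which carries the bulk of the conditionally convergent series. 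To make that tail $o(1)$ uniformly in $p$ you must exploit cancellation between the classes in $H$ and in $-H$ for primes in the range $p<q\le X$ with $X$ far below $\exp(p^{c})$, and a Siegel--Walfisz-type input cannot supply this: such error terms are useful only when the modulus is at most a fixed power of $\log x$ (and are ineffective), so the intermediate range is untouched, and the required cancellation there is essentially equivalent to the statement about $\sum_\chi\log L(1,\chi)$ you are trying to prove. This is exactly why the paper does not argue at $s=1$: it bounds $g_{H,\pm1}(\sigma)\le C_p-\log(\sigma-1)$ for $\sigma>1$, using Montgomery--Vaughan's bound \eqref{boundpixpa} only for $n\ge 2p$ as in \eqref{step1}, and then transfers to $s=1$ by the Borel--Carath\'eodory argument of subsection \ref{sectionboundfH(1)} on the zero-free disc $|s-1|<2p^{-3}$, the total error being $O(\log p)=o(m)$; the alternative proof in the final section instead truncates $\log L(1,\chi)$ at $(\log p)^{100}$ using Montgomery's zero-density estimate and Granville--Soundararajan. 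Your sketch contains neither mechanism, so the claim that the primes $q\ne 2,p$ contribute $o(m)$ is unsupported as written.
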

}}

Whereas \eqref{asymptotichKminus} is an asymptotic on the logarithm of the relative class numbers, 
it seems hopeless to have an asymptotic on the relative class numbers themselves. 
Indeed, even in the much studied case of the cyclotomic fields of prime conductors, 
Kummer's conjecture asserts that 
$$h_{{\mathbb Q}(\zeta_p)}^-
=2p(1+o(1))\left (\frac{p}{4\pi^2}\right )^{(p-1)/4}.$$ 
However A. Granville showed in \cite{Granville} that this conjecture is false 
if the two well-known and widely believed Hardy-Littlewood's and Elliott-Halberstam's conjectures are true. 
See also \cite{Murty} for more recent results on Kummer's conjecture.

A lot of results have been obtained for relative class number of cyclotomic fields, 
e.g. see \cite{Deb}, \cite{KLMES} , \cite{Lepisto}, \cite{LZ}, \cite{M} and \cite{MM}.

However, we could not find results in the spirit of \eqref{asymptotichKminus} and \eqref{asymptotichKminusMersenne} 
in the literature.

\section{Proof of Theorem \ref{mainth1}}
To prove \eqref{asymptotichKminus} we adapt the method developed in \cite[Proof of Theorem 1]{MM}. 
Let us give some details of their method. 
Since $L(1,\chi)\neq 0$ for $\chi\in X_p^-$, 
by \eqref{formulahrelK}, 
there exists $r_p>0$ such that $L(s,\chi)\neq 0$ for $\vert s -1\vert < r_p$ and $\chi\in X_p^-$. 
As in \cite[Section 2]{MM} we take $r_p =2p^{-3}$. 
Hence $L(s,\chi)$ does not vanish in the union $\Omega_p$ 
of this disc and the half-plane $\sigma =\Re(s)>1$, 
by the infinite products of these $L$-functions. 
Therefore, in this simply connected open set $\Omega_p$ 
there exists an holomorphic determination of 
$$f_H(s)
:=\sum_{\chi\in X_p^-(H)}\log L(s,\chi)$$
such that 
$$\log L(s,\chi)
=\sum_{n\geq 2}\frac{\Lambda(n)}{\log n}\chi (n)n^{-s}
\text{ for $\Re(s)>1$},$$
where $\Lambda (n) =\log q$ if $n =q^k$ is a power of a prime $q$ and $\Lambda (n)=0$ otherwise 
(the von Mangoldt's function). 
The orthogonality relation 
\begin{equation}\label{orthogonality}
\varepsilon_H(n)
:=\frac{1}{\# X_p^-(H)}\sum_{\chi\in X_p^-(H)}\chi (n)
=\begin{cases}
1&\hbox{if $n\in H$,}\\
-1&\hbox{if $-n\in H$}\\
0&\hbox{otherwise}
\end{cases}
\end{equation}
gives 
$f_H(s)
=f_{H,+1}(s)-f_{H,-1}(s)$, 
where 
\begin{equation}\label{seriesfH}
f_{H,\varepsilon}(s)
=\frac{m}{2}\sum_{h\in H}\sum_{\substack{n\geq 2\\ n\equiv\varepsilon h\pmod p}}
\frac{\Lambda(n)}{\log n}n^{-s}
\ \ \ \ \ \text{(for $\Re(s)>1$ and $\varepsilon\in\{-1,+1\}$).}
\end{equation}
Now, by \eqref{formulahrelK} we have
\begin{equation}\label{hKminusfH(1)}
h_{\mathbb K}^- 
=w_{\mathbb K}\left (\frac{p}{4\pi^2}\exp\left (\frac{4}{m}f_H(1)\right )\right )^{m/4}.
\end{equation}

\subsection{Bound on $\vert f_H(1)\vert$ 
and proof of \eqref{asymptotichKminus} 
from \eqref{hKminusfH(1)}}\label{sectionboundfH(1)}
\noindent {\bf 1.} First, for $\sigma >1$ and $\xi\in (1,\sigma)$, we write 
\begin{equation}\label{fH(1)fH(sigma)}
f_H(1) 
=f_H(\sigma) -f_H'(\xi)(\sigma -1).
\end{equation} 

\noindent {\bf 2.} Second, bearing on \cite[Proof of Lemma 1]{MM} 
and using Montgomery and Vaughan's explicit bound on primes in arithmetic progressions, 
we will obtain in Lemma \ref{boundfH(sigma)} a bound of the type
\begin{equation}\label{fH(sigma)Cpd}
\vert f_H(\sigma)\vert\leq C_{p,d} -\log (\sigma -1)
\ \ \ \ \ \text{(for $1<\sigma\leq 1+1/(5\log 2)$)}.
\end{equation} 

\noindent {\bf 3.} Third, set $\sigma_0 =1+p^{-3}$. 
The closed disc $\vert s-\sigma_0\vert\leq 2p^{-3}$ is included in $\Omega_p$ 
and by \cite[Proof of Lemma 2]{MM} we have 
$$\Re (f_H(s)) 
=\log\prod_{\chi\in X_p^-(H)} \vert L(s,\chi)\vert
\leq\frac{1}{2}m\log p
\ \ \ \ \ \text{(for $\vert s-\sigma_0\vert\leq 2p^{-3}$)}.$$ 

\noindent {\bf 4.} Fourth, as in \cite[Lemma 3]{MM} 
we apply the Borel-Carath\'eodory lemma to $g(s):=f_H(s)-f_H(\sigma_0)$ 
which satisfies $g(\sigma_0)=0$ and $\Re (g(s))\leq M =m\log p$ for $\vert s-\sigma_0\vert\leq R=2p^{-3}$ 
to deduce that 
\begin{equation}\label{fHprime(xi)}
\vert f_H'(\xi)\vert
=\vert g'(\xi)\vert
\leq 8M/R
=4mp^3\log p
\ \ \ \ \ \text{(for $\vert \xi\ -\sigma_0\vert\leq R/2 =p^{-3}$)}.
\end{equation} 

\noindent {\bf 5.} Using \eqref{fH(1)fH(sigma)}, \eqref{fH(sigma)Cpd} and \eqref{fHprime(xi)}, 
we end with the upper bound
$$\vert f_H(1)\vert
\leq C_{p,d} -\log (\sigma -1) +8M(\sigma -1)/R
\ \ \ \ \ \text{( for $1<\sigma\leq\sigma_0 +R/2 =1+2p^{-3}$)},$$
which for the optimal choice $\sigma =1+\frac{R}{8M}$ yields 
$$\vert f_H(1)\vert
\leq C_{p,d}+\log\left ({8eM/R}\right )
=C_{p,d}+\log (4emp^3\log p)
\leq C_{p,d}+\log (4ep^4\log p).$$
We have thus proved:

\begin{lemma}
With $C_{p,d}$ as defined in Lemma \ref{boundfH(sigma)} below, 
we have 
\begin{equation}\label{fH(1)Cpd}
\vert f_H(1)\vert
\leq C_{p,d}+\log\left (4ep^4\log p\right ).
\end{equation} 
\end{lemma}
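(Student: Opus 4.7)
The plan is to bound $f_H(1)$ by combining a real-variable mean value identity with a Borel-Carathéodory-type estimate for the derivative. For $1<\sigma\le 1+2p^{-3}$ and some $\xi\in(1,\sigma)$ I would start from the identity $f_H(1)=f_H(\sigma)-(\sigma-1)f_H'(\xi)$ and choose $\sigma$ at the end so as to balance the two contributions. This converts the problem into bounding $f_H$ on a line slightly to the right of $1$ and bounding $f_H'$ on a short real interval.

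For the first term, I expect Lemma \ref{boundfH(sigma)} (referenced but not yet proved) to supply a bound of the form $|f_H(\sigma)|\le C_{p,d}-\log(\sigma-1)$ valid on a short right-neighbourhood of $1$. This should rest on inserting the orthogonality formula \eqref{orthogonality} into the Dirichlet series for $\log L(s,\chi)$, producing the expression \eqref{seriesfH} for $f_{H,\pm 1}(\sigma)$, and then controlling the sums of $\Lambda(n)n^{-\sigma}$ over $n\equiv\pm h\pmod p$ via the explicit Montgomery and Vaughan form of Brun--Titchmarsh. The $-\log(\sigma-1)$ term captures the Mertens-type logarithmic divergence at $s=1$, whereas $C_{p,d}$ absorbs the error coming from the distribution of primes in the residue classes $\pm H\pmod p$.

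For the derivative, since no $L(s,\chi)$ with $\chi\in X_p^-(H)$ vanishes on the domain $\Omega_p$, the function $f_H$ is holomorphic on the closed disc $|s-\sigma_0|\le 2p^{-3}$ with $\sigma_0:=1+p^{-3}$. A crude pointwise bound on each $|L(s,\chi)|$ on this disc yields $\Re f_H(s)\le \frac{m}{2}\log p$. Applying the Borel-Carathéodory lemma to $g(s):=f_H(s)-f_H(\sigma_0)$, which satisfies $g(\sigma_0)=0$ and $\Re g\le M:=m\log p$ on the disc of radius $R:=2p^{-3}$, produces $|f_H'(\xi)|\le 8M/R=4mp^3\log p$ on the disc of half that radius, which contains every $\xi$ we need.

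Substituting into the mean value identity gives $|f_H(1)|\le C_{p,d}-\log(\sigma-1)+4mp^3(\log p)(\sigma-1)$. The optimal choice $\sigma-1=1/(4mp^3\log p)$ then produces $|f_H(1)|\le C_{p,d}+\log(4emp^3\log p)$, and the trivial estimate $m\le p$ yields the claimed bound $C_{p,d}+\log(4ep^4\log p)$. The genuine obstacle is the auxiliary lemma bounding $f_H(\sigma)$: obtaining uniform control in $p$ of the prime-in-progressions sums with the correct main term $-\log(\sigma-1)$ is the technical heart of the argument. Once it is in hand, the Borel-Carathéodory step and the optimization over $\sigma$ are entirely routine.
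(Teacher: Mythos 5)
Your proposal follows the paper's own argument essentially verbatim: the same mean value identity $f_H(1)=f_H(\sigma)-(\sigma-1)f_H'(\xi)$, the same appeal to Lemma \ref{boundfH(sigma)} for $\vert f_H(\sigma)\vert\le C_{p,d}-\log(\sigma-1)$, the same Borel--Carath\'eodory step on the disc $\vert s-\sigma_0\vert\le 2p^{-3}$ with $M=m\log p$ giving $\vert f_H'(\xi)\vert\le 8M/R=4mp^3\log p$, and the same optimization $\sigma-1=1/(4mp^3\log p)$ followed by $m\le p$. It is correct and matches the paper's proof.
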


Now, first assume that $d$ is as in \eqref{restriction2}. 
Using \eqref{Cpd2} we get that 
$$\frac{4}{m}\vert f_H(1)\vert
=\frac{4d}{p-1}\vert f_H(1)\vert
=o(1)$$
as $p$ tends to infinity. 
The first assertion of Theorem \ref{mainth1} follows, by \eqref{hKminusfH(1)}.\\

Second, assume that $d$ is as in \eqref{restriction3}. 
Then 
$p^{1/d}\geq\log p$ 
and 
$C_{p,d}
\leq\frac{p}{2\log p}+\frac{3}{2}+\frac{3}{8}\log p$, 
by \eqref{Cpd1}.
Hence, we also get
$$\frac{4}{m}\vert f_H(1)\vert
=\frac{4d}{p-1}\vert f_H(1)\vert
=o(1)$$ 
as $p$ tends to infinity. 
The second assertion of Theorem \ref{mainth1} follows, by \eqref{hKminusfH(1)}.

\subsection{Proof of the bound on $\vert f_H(\sigma)\vert$ for $\sigma>1$ given in \eqref{fH(sigma)Cpd}}
In \cite{MM} the authors dealt only with cyclotomic fields, 
i.e. with the case $m=p-1$ and $H=\{1\}$. 
In the second step of the strategy above they prove the bound 
$\vert f_{\{1\}}(\sigma)\vert\leq 3+\log (p/(\sigma-1))$ for $1<\sigma\leq 5/2$. 
To obtain this bound they first prove that for $m=p-1$ and $H=\{1\}$, we have
$$\frac{m}{2}\sum_{h\in H}\sum_{\substack{q\geq 2\\ q\equiv \varepsilon h\pmod p}}
q^{-\sigma}
\leq e^{-1}+\log\frac{3}{2(\sigma-1)}
\ \ \ \ \ \text{(for $1<\sigma\leq 5/2$ and $\varepsilon\in\{-1,+1\}$)}$$ 
(sums over primes $q$). 
The key point is that in this sum the primality of $q$ implies $q\geq 2p-1$. 
This is no longer true for $d\geq 3$. 
For example, let $p$ be a prime of the form $p=p_0^2+p_0+1$, with $p_0$ a prime. 
Take $d=3$ and $H=H_3 =\{1,p_0,p_0^2\}$. 
For $q =p_0$ we get the term 
$\frac{p-1}{6}p_0^{-\sigma}\geq\frac{p-1}{6}p^{-\sigma/2}$ in the sum above
and this term is $\gg p^{1/2}$ in the range $1<\sigma\leq 1+p^{-5}$. 
This ruins the hope of having something as good as \cite[Lemma 1]{MM} for $d\geq 3$. 
Instead, we have:

\noindent\frame{\vbox{
\begin{lemma}\label{boundfH(sigma)}
For $p\geq 29$ a prime, 
$1<\sigma\leq 1+1/(5\log 2)$ 
and $H$ a subgroup of odd order $d$ of the multiplicative group $({\mathbb Z}/p{\mathbb Z})^*$
we have the following explicit bounds
$$0\leq f_{H,-1}(\sigma),\ f_{H,+1}(\sigma),\ \left\vert f_H(\sigma)\right\vert
\leq C_{p,d}-\log(\sigma-1),$$
where 
\begin{equation}\label{Cpd1}
C_{p,d}
=\frac{1}{2}p^{1-1/d}
+\frac{3}{2} 
+\frac{3}{8}\log p .
\end{equation}
Moreover, as $p$ tends to infinity and $\phi(d)=o(\log p)$ we can take the following non-explict expression
\begin{equation}\label{Cpd2}
C_{p,d}
=\frac{3}{2} 
+\frac{3}{8}\log p 
+o(m),
\text{ where }
m=(p-1)/d.
\end{equation}
\end{lemma}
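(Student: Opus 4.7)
The inequalities $0 \leq f_{H,\pm 1}(\sigma)$ are immediate from \eqref{seriesfH}, whose summands are all non-negative. Since $f_H = f_{H,+1} - f_{H,-1}$, we have $|f_H(\sigma)| \leq \max\{f_{H,+1}(\sigma), f_{H,-1}(\sigma)\}$, so it suffices to bound each of $f_{H,\pm 1}(\sigma)$ individually by $C_{p,d} - \log(\sigma - 1)$.

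My plan is to split the double sum defining $f_{H,\varepsilon}(\sigma)$ into three pieces: $T_1$, over prime powers $n$ with $2 \leq n < p$ and $n \pmod p \in \varepsilon H$; $T_2$, over primes $q \geq p$ with $q \pmod p \in \varepsilon H$; and $T_3$, over proper prime powers $n = q^k$ with $k \geq 2$ and $n \geq p$. The piece $T_1$ is essentially trivial in the case $H=\{1\}$ treated in \cite{MM}, but becomes nontrivial in general because small primes or prime powers can lie in $\varepsilon H$ modulo $p$ (as in the example $q = p_0$ in $H_3$ with $p = p_0^2 + p_0 + 1$ given in the text); the summand $\frac{1}{2} p^{1-1/d}$ of \eqref{Cpd1} must absorb precisely this contribution.

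The key observation for $T_1$ is that, since $d$ is odd, every integer $n \in \{2, \ldots, p-1\}$ representing a class of $\varepsilon H$ modulo $p$ satisfies $n^d \equiv \varepsilon^d = \varepsilon \pmod p$, hence as a positive integer $n^d \geq p - 1$, so $n \geq (p-1)^{1/d}$. Combined with $\#\varepsilon H = d$ and $\Lambda(n)/\log n \leq 1$, this yields
$$T_1 \leq \frac{m}{2} \cdot d \cdot (p-1)^{-\sigma/d} \leq \frac{p-1}{2} (p-1)^{-1/d} \leq \frac{1}{2} p^{1-1/d}.$$
For $T_2$ I would apply Montgomery--Vaughan's explicit Brun--Titchmarsh inequality $\pi(x; p, a) \leq 2x/((p-1) \log(x/p))$ for $x > p$ to each of the $d$ residues $a \in \varepsilon H$ and then integrate by parts against $x^{-\sigma}$; the prefactor $m/2 = (p-1)/(2d)$ cancels the $1/((p-1) d)$ produced by the sum over residues, leaving the main term $-\log(\sigma - 1)$ together with an explicit additive $\frac{3}{8}\log p + O(1)$. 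The remainder $T_3$ is bounded by $\sum_q \sum_{k \geq 2} q^{-k\sigma}/k = O(1)$ uniformly on $1 < \sigma \leq 1 + 1/(5\log 2)$ and gets absorbed into the $3/2$ constant.

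For the non-explicit refinement \eqref{Cpd2} under $\phi(d) = o(\log p)$, one must upgrade the worst-case bound $T_1 \leq \frac{1}{2} p^{1-1/d}$ to $o(m)$. This is the main obstacle: for $d$ close to $\log p$ (still compatible with $\phi(d) = o(\log p)$ when $d$ is highly composite), $\frac{1}{2} p^{1-1/d}$ is not $o(m) = o((p-1)/d)$ from the worst case alone. One must exploit the subgroup structure more carefully, for instance by bounding the number of distinct small primes in $H$ and refining $n^d \geq p - 1$ to $n^{d'} \geq p - 1$ for $n$ of order exactly $d' \mid d$, to improve the per-element bound. The remaining explicit-constant tracking in $T_2$ and $T_3$ is routine bookkeeping through the Montgomery--Vaughan integration needed to land precisely on the coefficients $\frac{3}{8}$ and $\frac{3}{2}$ of \eqref{Cpd1}.
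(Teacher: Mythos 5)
Your treatment of the range $2\le n<p$ (your $T_1$) is exactly the paper's: since $d$ is odd, $n\equiv\varepsilon h\pmod p$ forces $n^d\equiv\pm1\pmod p$, hence $n\ge(p-1)^{1/d}$, and with at most one representative per class this gives $\Sigma_1\le\frac12 p^{1-1/d}$ as in \eqref{step3}. But your handling of the tail is where the proposal breaks down. First, your claim that $T_3\le\sum_q\sum_{k\ge2}q^{-k\sigma}/k=O(1)$ silently drops the prefactor $\frac m2$ (equivalently, it assumes for free that proper prime powers equidistribute among the $d$ classes of $\varepsilon H$). Keeping the prefactor, the trivial bound is of size $m p^{-1/2}\asymp\sqrt p/d$, and even using the congruence condition the interval $[p,2p)$ alone may contain one prime power per exponent $k\le\log(2p)/\log 2$ in each of the $d$ classes, contributing up to roughly $\frac m2\cdot d\cdot\frac{\log(2p)}{2\log 2}\cdot p^{-1}\asymp\log p$; no elementary argument makes this $O(1)$. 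This is precisely where the paper's $\frac38\log p$ comes from: it bounds $\Pi(x;a,p)-\pi(x;a,p)$ by noting that $\{n:\ n^k\equiv a\pmod p\}$ is a union of at most $\gcd(k,p-1)\le k$ progressions mod $p$, so the count up to $X$ is at most $k(X/p+1)$, and the accumulated $+1$'s over $k\le\log x/\log 2$ give the $\frac{3}{2}\log x$ term of \eqref{Pipi}, which after Lemma \ref{boundSc}, Lemma \ref{seriesintegral} and partial summation yields the $\frac38\log(2p)+\frac38$ in \eqref{step1}. Conversely, your attribution of $\frac38\log p$ to the Montgomery--Vaughan integration over primes is incorrect: that piece, taken over $q\ge 2p$, contributes at most $-\log(\sigma-1)$ with no $\log p$ term; note also that you must start that integration at $2p$ (treating $p\le n<2p$ trivially as in \eqref{step2}), because the bound $2x/((p-1)\log(x/p))$ blows up as $x\to p^{+}$ and $\int_p \frac{{\rm d}x}{x\log(x/p)}$ diverges at the lower endpoint.

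The second gap concerns \eqref{Cpd2}, which you correctly identify as the main obstacle but do not prove. Your suggested refinement (using $n^{d'}\ge p-1$ for elements of exact order $d'\mid d$) cannot suffice: the dominant contribution it yields is of size $\frac m2\,\phi(d)(p-1)^{-1/d}$, which is $o(m)$ only when roughly $d\le\log p/\log\log p$, whereas $\phi(d)=o(\log p)$ permits $d$ somewhat larger than $\log p$ (since $d\ll\phi(d)\log\log\phi(d)$), in which case $(p-1)^{-1/d}$ stays bounded away from $0$. The paper instead imports nontrivial input from \cite{MS}: writing $H'$ for the subgroup generated by $-1$ and $H$ and $\rho(\lambda,p)=\min\{rs:\ (r,s)\in{\mathbb N}^2\setminus\{(0,0)\},\ r\equiv\lambda s\pmod p\}$, one has $\Sigma_1\le\frac m2\sum_{1\ne\lambda\in H'}\rho(\lambda,p)^{-1}$, then $\vartheta(H',p)\gg p^{1/\phi(d)}$ by \cite[Lemma 4.1]{MS} and $\sum_{1\ne\lambda\in H'}\rho(\lambda,p)^{-1}\ll p^{-1/(10\phi(d))}+dp^{-1/4}$ by \cite[Lemma 4.3]{MS}, whence $\Sigma_1=o(m)$ under $\phi(d)=o(\log p)$. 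Without a tool of this strength the second assertion of the lemma is not reachable by the elementary counting you propose.
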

}}

Since $f_H(s)=f_{H,+1}(s)-f_{H,-1}(s)$, 
it suffices to prove the bound for the $f_{H,\varepsilon}(\sigma)$'s 
with $\varepsilon\in\{\pm 1\}$.
To begin with we notice that \eqref{seriesfH} and $\Lambda(n)\leq\log n$ give
$$0
<f_{H,\varepsilon}(\sigma)
\leq\Sigma
:=\frac{m}{2}\sum_{h\in H}\sum_{\substack{n\geq 2\\ n\equiv\varepsilon h\pmod p}}
\frac{1}{n^\sigma}
\ \ \ \ \ \text{(for $\sigma>1$ and $\varepsilon\in\{-1,+1\}$).}$$
Now, to bound this double sum $\Sigma$ we decompose it into 3 sub-sums $\Sigma=\Sigma_1+\Sigma_2+\Sigma_3$ 
by splitting the domain of variation of its summation index $n$ into three parts, 
$\Sigma_1$ for $2\leq n<p$, $\Sigma_2$ for $p\leq n<2p$ and $\Sigma_3$ for $n\geq 2p$. 
We bound $\Sigma_1$ in \eqref{step3} and \eqref{step3bis}, $\Sigma_2$ in \eqref{step2} and $\Sigma_3$ in \eqref{step1}.

\subsubsection{The range $2\leq n<p$}\label{the range2<=n<p}\label{firstrange}
If $n\equiv \varepsilon h\pmod p$ for some $h\in H$ and $n\geq 2$, 
then $n^d\equiv \varepsilon^dh^d\equiv\varepsilon^d\equiv\pm 1\pmod p$
and $n\geq (p-1)^{1/d}$.
Noticing that $\# H =d$ and $md =p-1$, it follows that
\begin{equation}\label{step3}
\Sigma_1
:=\frac{m}{2}\sum_{h\in H}\sum_{\substack{2\leq n<p\\n\equiv\varepsilon h\pmod p}}
\frac{1}{n^\sigma}
\leq\frac{(p-1)^{1-1/d}}{2}
\leq\frac{1}{2}p^{1-1/d}
\ \ \ \ \ \text{(for $\sigma>1$ and $\varepsilon\in\{-1,+1\}$).}
\end{equation}

Using some of the results of the recent preprint \cite{MS}, 
we can improve on \eqref{step3}. 
This improvement shows that Theorem \ref{mainth1} still holds true with the restriction \eqref{restriction2} 
weaker than the restriction \eqref{restriction3}.

\begin{lemma}
Take $\varepsilon\in\{-1,+1\}$.
As $p$ tends to infinity and $\phi(d)=o(\log p)$ we have 
\begin{equation}\label{step3bis}
\Sigma_1
=o(m).
\end{equation}
\end{lemma}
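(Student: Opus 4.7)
The plan is to replace the naive lower bound $n \geq (p-1)^{1/d}$ used in \eqref{step3} by a sharper, cyclotomic one depending on $\phi(d)$ rather than $d$, and then to appeal to the distribution estimates of \cite{MS} to control the resulting sum. To begin, I would rewrite $\Sigma_1$ concretely: for each $h \in H$, there is exactly one integer $n \in [1,p-1]$ with $n \equiv \varepsilon h \pmod p$ (namely $n=h$ if $\varepsilon=+1$ and $n=p-h$ if $\varepsilon=-1$), and this integer equals $1$ only in the excluded case $\varepsilon=+1$, $h=1$. Writing $\mathcal{N}_\varepsilon := (\varepsilon H \bmod p)\cap[2,p-1]$, we thus have
\[
\Sigma_1 \;=\; \frac{m}{2}\sum_{n\in\mathcal{N}_\varepsilon} n^{-\sigma}.
\]
Since $\sigma > 1$ gives $n^{-\sigma}\leq n^{-1}$, the claim $\Sigma_1 = o(m)$ reduces to showing the unweighted tail $S_\varepsilon := \sum_{n\in\mathcal{N}_\varepsilon} 1/n = o(1)$ as $p\to\infty$ under $\phi(d)=o(\log p)$.

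Next, I would lower-bound each $n \in \mathcal{N}_\varepsilon$ via cyclotomic polynomials. Such an $n$ satisfies $n^d \equiv \varepsilon \pmod p$, so it has some multiplicative order $d' > 1$ modulo $p$: $d'\mid d$ if $\varepsilon=+1$, or $d'\in\{2\}\cup\{2d'':d''\mid d,\ d''>1\}$ if $\varepsilon=-1$. For each such $n$, the integer $\Phi_{d'}(n)$ is nonzero and divisible by $p$, hence $|\Phi_{d'}(n)|\geq p$; combining with the trivial upper bound $|\Phi_{d'}(n)|\leq c_{d'}\, n^{\phi(d')}$ coming from the coefficients of $\Phi_{d'}$, this yields $n \geq (p/c_{d'})^{1/\phi(d')}$. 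Grouping the elements of $\mathcal{N}_\varepsilon$ by their exact order (contributing $\phi(d')$ elements each) and using $\phi(2d'')=\phi(d'')$ for odd $d''$, I obtain
\[
S_\varepsilon \;\leq\; \frac{\mathbf{1}_{\{\varepsilon=-1\}}}{p-1} \;+\; \sum_{\substack{d''\mid d\\ d''>1}} \phi(d'') \Bigl(\frac{p}{c_{d''}}\Bigr)^{-1/\phi(d'')},
\]
where the indicator term accounts for the single contribution $n=p-1$ of order $2$ in the $\varepsilon=-1$ case.

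The main obstacle is to show that this combined sum is $o(1)$ throughout the range $\phi(d)=o(\log p)$. The worst individual term, $\phi(d)\, p^{-1/\phi(d)}$, is $o(1)$ only under the stricter hypothesis $\phi(d)=o(\log p/\log\log p)$, so a genuine refinement is needed to cover the full range. This is where the estimates of \cite{MS} enter: they exploit the fact that divisors $d''\mid d$ with $\phi(d'')$ close to $\phi(d)$ are scarce, while divisors with smaller $\phi(d'')$ provide correspondingly stronger lower bounds $n \geq p^{1/\phi(d'')}$, producing the requisite $o(1)$ decay of the full sum over the divisor lattice. Granting this, the conclusion $\Sigma_1 \leq \tfrac{m}{2}\, S_\varepsilon = o(m)$, which is precisely \eqref{step3bis}, follows immediately.
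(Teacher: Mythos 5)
Your reduction is fine: rewriting $\Sigma_1$ as $\tfrac m2\sum_{n\in\mathcal N_\varepsilon}n^{-\sigma}$ and reducing to $S_\varepsilon=\sum_{n\in\mathcal N_\varepsilon}1/n=o(1)$ is exactly the right normalization, and the cyclotomic-polynomial lower bound $n\geq c\,p^{1/\phi(d')}$ for an element of exact order $d'$ is correct (it is the same idea behind the explicit bound \eqref{step3} and its refinement $n\geq p^{1/\phi(d)}-1$). But the decisive step is missing, and the way you propose to close it would not work. You correctly observe that the term coming from elements of exact order $d$ is $\phi(d)\,p^{-1/\phi(d)}$, which is not $o(1)$ in the full range $\phi(d)=o(\log p)$; your suggested fix, that ``divisors $d''\mid d$ with $\phi(d'')$ close to $\phi(d)$ are scarce,'' cannot repair this, because the problem already occurs for the single divisor $d''=d$: there are $\phi(d)$ elements of exact order $d$, and the per-element worst-case bound $n\gg p^{1/\phi(d)}$ gives nothing better than the very term you identified as too large. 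No analysis of the divisor lattice of $d$ improves this; what is needed is a statement that \emph{few} elements of $\varepsilon H\bmod p$ can simultaneously have small representatives, i.e.\ a counting bound over the subgroup, not a worst-case bound per element. Your appeal to \cite{MS} is therefore a genuine gap: you invoke it for a mechanism it does not provide.

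The paper's proof supplies exactly this missing counting input. It passes to the subgroup $H'$ of order $2d$ generated by $-1$ and $H$, bounds $\Sigma_1\leq\tfrac m2\sum_{1\neq\lambda\in H'}\rho(\lambda,p)^{-1}$ where $\rho(\lambda,p)=\min\{rs:\ (r,s)\in\mathbb N\setminus\{(0,0)\},\ r\equiv\lambda s\pmod p\}$ minorizes the least positive residue, and then uses two results of \cite{MS}: Lemma 4.1, giving $\vartheta(H',p)=\min_{1\neq\lambda\in H'}\rho(\lambda,p)\gg p^{1/\phi(d)}$ (the analogue of your cyclotomic bound), and crucially Lemma 4.3, giving $\sum_{1\neq\lambda\in H'}\rho(\lambda,p)^{-1}\ll\vartheta(H',p)^{-1/10}+dp^{-1/4}\ll\exp\bigl(-\tfrac{\log p}{10\phi(d)}\bigr)+dp^{-1/4}$. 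The essential feature is that the right-hand side carries no factor $\phi(d)$ in front of the exponentially small term, because Lemma 4.3 bounds how many $\lambda\in H'$ can have small $\rho(\lambda,p)$; this is what makes the estimate $o(1)$ under the mere hypothesis $\phi(d)=o(\log p)$. Without importing that lemma (or proving an equivalent counting statement for small residues of subgroup elements), your argument only establishes the lemma in the narrower range $\phi(d)\log\phi(d)=o(\log p)$, which is essentially the explicit regime \eqref{restriction3} already covered by \eqref{step3}.
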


\begin{proof}
We keep the notation of \cite{MS}.
Let $H'$ be the subgroup of order $2d$ of $({\mathbb Z}/p{\mathbb Z})^*$ generated by $-1$ and $H$. 
Then
 $$\Sigma_1
 \leq\frac{m}{2}\sum_{1\neq\lambda\in H'}\rho'(\lambda,p)^{-1}
 \leq\frac{m}{2}\sum_{1\neq\lambda\in H'}\rho(\lambda,p)^{-1},$$ 
 where for $\lambda\in ({\mathbb Z}/p{\mathbb Z})^*$ we set
 \begin{multline*}
 \rho(\lambda,p)
 :=\min\{rs\ :\ (r,s)\in {\mathbb N}\setminus\{(0,0)\},\ r\equiv\lambda s\pmod p\}\\
 \leq\rho'(\lambda,p)
 :=\min\{r\geq 1\ :\ r\equiv\lambda\pmod p\}.
 \end{multline*}
 Setting 
 $\vartheta (H',p)
 =\min_{1\neq\lambda\in H'}\rho(\lambda,p)$ 
 and noticing that $\phi(2d)=\phi(d)$, 
 we have 
 $\vartheta (H',p)\gg p^{1/\phi(d)}$, by \cite [Lemma 4.1]{MS}. 
 Consequently,
$$\sum_{1\neq\lambda\in H'}\rho(\lambda,p)^{-1}
 \ll\vartheta(H',p)^{-1/10}+dp^{-1/4}
 \ll p^{-1/(10\phi(d))}+dp^{-1/4}
 =\exp\left (-\frac{\log p}{10\phi(d)}\right )+dp^{-1/4},$$ 
 by \cite[Lemma 4.3]{MS} applied with $\alpha =1$, $\beta=1/5$ and $\kappa =1/10$.
The desired result follows.
\end{proof}

\subsubsection{The range $p\leq n<2p$}
Noticing that $\# H =d$ and $md =p-1$, we clearly have
\begin{equation}\label{step2}
\Sigma_2
:=\frac{m}{2}\sum_{h\in H}\sum_{\substack{p\leq n<2p\\n\equiv\varepsilon h\pmod p}}
\frac{1}{n^\sigma}
\leq\frac{p-1}{2p}
\leq\frac{1}{2}
\ \ \ \ \ \text{(for $\sigma>1$ and $\varepsilon\in\{-1,+1\}$).}
\end{equation}

\subsubsection{The range $n\geq 2p$}
To make our proof somewhat clearer than \cite[Proof of Lemma 1]{MM}, 
we state and prove two technical Lemmas:

\begin{lemma}\label{boundSc}
For $\alpha>1$, set $X_\alpha=2(\alpha+1)/(\alpha-1)$.
For $\beta>0$, $x\geq \exp(2/\beta)$, and $X\geq 2/\beta$, set 
$$S_\beta(x)
=\frac{1}{\sqrt x}\sum_{2\leq k\leq\beta\log x}x^{1/k}
\text { and }G_\alpha(X)
=\left (\int_{1/\beta}^{X/2}e^{t}\frac{{\rm d}t}{t^2}\right )
-\frac{\alpha-1}{X}e^{X/2}.$$
If $(\alpha-1)/(\alpha+1)\leq\beta$ and $G_\alpha(X_\alpha)\leq 0$, 
then $S_\beta(x)\leq\alpha$ for $x\geq\exp(2/\beta)$.\\ 
This holds true for 
 $\alpha=11/4$ and $\beta=1/\log(2)$. 
\end{lemma}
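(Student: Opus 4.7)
The plan is to compare the sum defining $S_\beta(x)$ with an integral, perform a change of variables, and reduce the inequality $S_\beta(x)\leq\alpha$ to the pointwise condition $G_\alpha(L)\leq 0$ with $L=\log x$. Writing $x^{1/k}=e^{L/k}$ and noting that $k\mapsto e^{L/k}$ is decreasing on $(0,\infty)$, the elementary integral test yields
$$\sum_{2\leq k\leq\beta L} e^{L/k}\ \leq\ e^{L/2}+\int_2^{\beta L} e^{L/k}\,dk.$$
The substitution $t=L/k$ (so $dk=-L/t^2\,dt$) converts the integral into $L\int_{1/\beta}^{L/2} e^t/t^2\,dt$; dividing by $\sqrt x=e^{L/2}$ then gives
$$S_\beta(x)\ \leq\ 1+L\,e^{-L/2}\int_{1/\beta}^{L/2}\frac{e^t}{t^2}\,dt.$$
If $G_\alpha(L)\leq 0$, i.e.\ $\int_{1/\beta}^{L/2} e^t/t^2\,dt\leq (\alpha-1)e^{L/2}/L$, the right-hand side is at most $1+(\alpha-1)=\alpha$, which is the bound sought.

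The next step is to pass from the single condition $G_\alpha(X_\alpha)\leq 0$ to $G_\alpha(X)\leq 0$ on the whole half-line $X\geq 2/\beta$. A direct differentiation gives
$$G_\alpha'(X)\ =\ \frac{2e^{X/2}}{X^2}-(\alpha-1)\Bigl(\frac{e^{X/2}}{2X}-\frac{e^{X/2}}{X^2}\Bigr)\ =\ \frac{e^{X/2}}{X^2}\Bigl((\alpha+1)-\frac{(\alpha-1)X}{2}\Bigr),$$
which vanishes precisely at $X=X_\alpha=2(\alpha+1)/(\alpha-1)$, is positive for $X<X_\alpha$ and negative for $X>X_\alpha$. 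The hypothesis $(\alpha-1)/(\alpha+1)\leq\beta$ is exactly $X_\alpha\geq 2/\beta$, so $X_\alpha$ sits inside the relevant half-line and is a global maximum of $G_\alpha$ on $[2/\beta,\infty)$. Consequently $G_\alpha(X_\alpha)\leq 0$ forces $G_\alpha(X)\leq 0$ for every $X\geq 2/\beta$, and combined with the first paragraph this proves the general assertion.

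For the concrete choice $\alpha=11/4$, $\beta=1/\log 2$, one checks $(\alpha-1)/(\alpha+1)=7/15<1/\log 2=\beta$ and $X_\alpha=30/7$, so the general criterion reduces to the single numerical inequality
$$\int_{\log 2}^{15/7}\frac{e^t}{t^2}\,dt\ \leq\ \frac{49}{120}\,e^{15/7},$$
which is readily verified by a standard quadrature rule (the integrand is smooth, attains its minimum at $t=2$, and the bound holds with a small margin). The main obstacle is really only this last numerical check: the analytic skeleton — integral test, a change of variables, and a one-variable extremum argument — is entirely routine, and the particular constants $\alpha=11/4$, $\beta=1/\log 2$ appear to be chosen so that the numerical inequality just barely holds.
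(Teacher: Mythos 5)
Your proof is correct and follows essentially the same route as the paper: bound the sum by an integral, substitute $t=L/k$ to reduce the claim to $G_\alpha(\log x)\leq 0$, locate the maximum of $G_\alpha$ at $X_\alpha$ by differentiation (your derivative formula is in fact the corrected version of the paper's, which has a harmless typo), and finish with the numerical check at $\alpha=11/4$, $\beta=1/\log 2$. The only cosmetic difference is that the paper treats the short range $\exp(2/\beta)\leq x<\exp(3/\beta)$ (where $S_\beta(x)=1$) separately, while your integral-test inequality covers it uniformly.
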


\begin{proof}
If $\exp(2/\beta)\leq x<\exp(3/\beta)$, then $S_\beta(x)=1\leq\alpha$. 
Now assume that $x\geq\exp(3/\beta)$.
Take $X=\log x\geq 2/\beta$. 
Since 
$$S_\beta(x)-1
=\frac{1}{\sqrt x}\sum_{3\leq k\leq\beta\log x} x^{1/k}
\leq\frac{1}{\sqrt x}\int_2^{\beta\log x}x^{1/t}{\rm d}t
=e^{-X/2}\int_2^{\beta X}e^{X/t}{\rm d}t
=Xe^{-X/2}\int_{1/\beta}^{X/2}e^{t}\frac{{\rm d}t}{t^2},$$
by the change of variable $t\mapsto X/t$, 
we have 
$$(S_\beta(x)-\alpha)\frac{e^{X/2}}{X}
=(S_\beta(x)-1)\frac{e^{X/2}}{X}
-\frac{\alpha-1}{X}e^{X/2}
\leq G_\alpha(X).$$
Since $G_\alpha'(X) =(X_\alpha-X)\frac{e^{X/2}}{2(\alpha-1)X^2}$ 
and 
$X_\alpha\geq 2/\beta$,
we have $G_\alpha(X)\leq G_\alpha(X_\alpha)$ for $X\geq 2/\beta$.
\end{proof}

\begin{lemma}\label{seriesintegral}
Let $N\geq 1$ be a given integer.
Let $a_n\in {\mathbb C}$, $n\geq N$. 
For $x\geq N$, set $A(x)=\sum_{N\leq n\leq x}a_n$.
Let $b\in{\mathcal C}^1([N,+\infty),{\mathbb C})$ be such that $\lim_{+\infty}A(x)b(x)=0$ 
and $\int_N^\infty A(t)b'(t){\rm d}t$ is convergent. 
Then $\sum_{n\geq N}a_nb(n)$ is convergent 
and its sum is equal to 
$-\int_N^\infty A(t)b'(t){\rm d}t$.
\end{lemma}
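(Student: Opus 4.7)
The plan is to apply classical Abel summation by parts and then convert the resulting telescoping sum into an integral by exploiting the fact that the partial-sum function $A(t)$ is piecewise constant on the intervals $[n,n+1)$.

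First, I would set up the discrete integration by parts. With the convention $A(N-1)=0$, one has $a_n=A(n)-A(n-1)$ for all $n\geq N$. Splitting the partial sum $\sum_{n=N}^{M}a_nb(n)$ according to this decomposition and shifting the index by one in one of the two resulting sums gives the identity
$$\sum_{n=N}^{M}a_n b(n)=A(M)b(M)-\sum_{n=N}^{M-1}A(n)\bigl(b(n+1)-b(n)\bigr).$$

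Second, I would convert the right-hand sum to an integral. Since $b\in\mathcal{C}^1$, we have $b(n+1)-b(n)=\int_n^{n+1}b'(t)\,{\rm d}t$; and since $A$ is constant on $[n,n+1)$ with value $A(n)$, this yields
$$A(n)\bigl(b(n+1)-b(n)\bigr)=\int_n^{n+1}A(t)b'(t)\,{\rm d}t.$$
Summing over $N\leq n\leq M-1$ telescopes the integration domain, so
$$\sum_{n=N}^{M}a_n b(n)=A(M)b(M)-\int_N^{M}A(t)b'(t)\,{\rm d}t.$$

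Third, I would let $M\to+\infty$. The first hypothesis gives $A(M)b(M)\to 0$, and the second hypothesis gives $\int_N^{M}A(t)b'(t)\,{\rm d}t\to\int_N^{\infty}A(t)b'(t)\,{\rm d}t$. Hence the partial sums on the left converge, proving that $\sum_{n\geq N}a_nb(n)$ is convergent with sum equal to $-\int_N^{\infty}A(t)b'(t)\,{\rm d}t$, as claimed.

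There is no real obstacle here: the result is a standard Abel-summation identity, stated in the slightly more general framework of complex-valued $\mathcal{C}^1$ functions $b$ rather than the customary real monotone setting. The only points requiring care are the bookkeeping of the boundary term $A(M)b(M)$ in the summation by parts and the justification — via the piecewise-constant nature of $A$ — of rewriting the discrete forward differences of $b$ as integrals of $b'$ against $A$.
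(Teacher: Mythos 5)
Your proof is correct and is essentially the paper's argument: both rest on the identity $\int_n^{n+1}A(t)b'(t)\,{\rm d}t=A(n)\bigl(b(n+1)-b(n)\bigr)$ coming from the piecewise constancy of $A$, followed by telescoping and passage to the limit using the two hypotheses. The only (immaterial) difference is direction of travel — the paper starts from $\int_N^xA(t)b'(t)\,{\rm d}t$ and recovers the partial sum plus the boundary term $A(x)b(x)$, whereas you start from the partial sum via Abel summation and convert it into the integral.
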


\begin{proof}
Setting $M =\lfloor x\rfloor$ and noticing that $A(t)=A(n)$ for $n\leq t<n+1$ and $A_N=a_N$, 
we have 
$$\int_N^xA(t)b'(t){\rm d}t
=\left (\sum_{n=N}^{M-1}\int_n^{n+1}A(n)b'(t){\rm d}t\right )+\int_M^xA(M)b'(t){\rm d}t
=-\left (\sum_{n=N}^Ma_nb(n)\right )+A(x)b(x).$$
\end{proof}

Now, set 
$$\pi(x;a,p) 
=\sum_{\substack{2\leq q\leq x\\ q\equiv a\pmod p}}1
\text{ and }
\Pi(x;a,p) 
=\sum_{\substack{2\leq n\leq x\\ n\equiv a\pmod p}}\frac{\Lambda (n)}{\log n}
=\sum_{\substack{2\leq q^k\leq x\\ q^k\equiv a\pmod p}}\frac{1}{k}$$
(sums over primes $q\geq 2$, over integers $n$ and over integers $k\geq 1$ and primes $q\geq 2$). 
As in \cite[Proof of Lemma 1]{MM} we will use Montgomery and Vaughan's bound 
\begin{equation}\label{boundpixpa}
\pi(x;p,a)
<\frac{2x}{(p-1)\log (x/p)}
\ \ \ \ \ \text{(for $x>p$).}
\end{equation}
The multiplicative group $({\mathbb Z}/p{\mathbb Z})^*$ being cyclic, 
the set $\{n\in {\mathbb Z};\ n^k\equiv a\pmod p\}$ is either empty 
or the disjoint union of $\gcd(k,p-1)$ arithmetic progressions of common difference $p$.
Hence, for $X\geq 1$ we have 
$\#\{n;\ 1\leq n\leq X,\ n^k\equiv a\pmod p\}
\leq (X/p+1)\gcd(k,p-1)
\leq k(X/p+1)$. 
Therefore, for $x\geq 4$ we have
$$0\leq\Pi(x;a,p) -\pi(x;a,p) 
=\sum_{2\leq k\leq\frac{\log x}{\log 2}}\frac{1}{k}\sum_{\substack{q\leq x^{1/k}\\ q^k\equiv a\pmod p}}1
\leq\sum_{2\leq k\leq\frac{\log x}{\log 2}}\left (\frac{x^{1/k}}{p}+1\right )$$
and for $2\leq x<4$ we have $\Pi(x;a,p) =\pi(x;a,p)$. 
Theefore, by Lemma \ref{boundSc} we have
\begin{equation}\label{Pipi}
0\leq\Pi(x;a,p) -\pi(x;a,p) 
\leq\frac{11\sqrt x}{4p}+\frac{3\log x}{2}
\ \ \ \ \ (x\geq 2).
\end{equation}
For a given $a\in{\mathbb Z}$, 
taking $N=2p$ and $b(t)=t^{-\sigma}$ and recalling that $md=p-1$, 
we obtain
$$\frac{md}{2}\sum_{\substack{q\geq 2p\\ q\equiv a\pmod p}}\frac{1}{q^\sigma}
\leq\frac{md\sigma}{2}\int_{2p}^\infty\frac{\pi (x;p,a)}{x^{\sigma+1}}{\rm d}x
\leq\sigma\int_{2p}^\infty\frac{1}{x^{\sigma}\log(x/p)}{\rm d}x
=\sigma p^{1-\sigma}\int_{(\sigma-1)\log 2}^\infty e^{-x}\frac{{\rm d}x}{x},$$
by Lemma \ref{seriesintegral}, \eqref{boundpixpa} 
and by the change of variables $x\mapsto\exp(x/(\sigma-1))$. 
For $\varepsilon\in\{-1,+1\}$ and $1<\sigma\leq 1+1/(5\log 2)$, 
noticing that $\# H=d$ and $\sigma p^{1-\sigma}\leq \sigma e^{1-\sigma}\leq 1$ for $p\geq 3$ and $\sigma>1$,
we get
$$\frac{m}{2}\sum_{h\in H}\sum_{\substack{q\geq 2p\\ q\equiv\varepsilon h\pmod p}}\frac{1}{q^\sigma}
\leq\int_{(\sigma-1)\log 2}^\infty e^{-x}\frac{{\rm d}x}{x}
=-\log(\sigma)+I((\sigma-1)\log 2)\leq -\log(\sigma-1),$$
since $a\mapsto I(a)$ increases with $a\in (0,1)$ and $I(1/5)\leq 0$,
where 
$$I(a)=-\log\log 2+\int_a^1(e^{-x}-1)\frac{{\rm d}x}{x}
+\int_1^\infty e^{-x}\frac{{\rm d}x}{x}.$$
Now, by Lemma \ref{seriesintegral} and \eqref{Pipi}, 
for $\sigma>1$ we have 
\begin{align*}
\frac{md}{2}\sum_{\substack{n\geq 2p\\n\equiv a\pmod p}}\frac{1}{n^\sigma}
-\frac{md}{2}\sum_{\substack{q\geq 2p\\ q\equiv a\pmod p}}\frac{1}{q^\sigma}
&\leq\frac{md\sigma}{2}\int_{2p}^\infty\frac{\Pi (x;p,a)-\pi (x;p,a)}{x^{\sigma+1}}{\rm d}x\\
&\leq\frac{p}{2}\int_{2p}^\infty\frac{\frac{11\sqrt x}{4p}+\frac{3\log x}{2}}{x^{2}}{\rm d}x
=\frac{11}{4\sqrt{2p}}+\frac{3\log(2p)}{8}+\frac{3}{8}
\end{align*}
where we used and $md=p-1$ and $\sigma/x^{\sigma-1}\leq 1$ for $\sigma>1$ and $x\geq\exp(1)$. 
Noticing that $\# H=d$ 
and that $\frac{11}{4\sqrt{2p}}+\frac{\log 8}{8}+\frac{3}{8}\leq 1$ for $p\geq 29$, 
for $\varepsilon\in\{-1,+1\}$, $p\geq 29$ and $1<\sigma\leq 1+1/(5\log 2)$ 
we end up with the bond
\begin{equation}\label{step1}
\Sigma_3
:=\frac{m}{2}\sum_{h\in H}\sum_{\substack{n\geq 2p\\n\equiv\varepsilon h\pmod p}}
\frac{1}{n^\sigma}
\leq 1+\frac{3}{8}\log p -\log(\sigma-1).
\end{equation}

\newpage
\section{Proof of Theorem \ref{mainth2}}
Here ${\rm Gal}({\mathbb Q}(\zeta_p)/{\mathbb K})$ is the subgroup $H=\{2^k;\ 0\leq k\leq d-1\}$ 
of order $d$ of the multiplicative group $({\mathbb Z}/p{\mathbb Z})^*$. 
For $\chi\in X_p^-(H)$ let $\chi'$ denote the character modulo $2p$ induced by $\chi$, 
i.e. $\chi'(n) =0$ is $n$ is even and $\chi'(n)=\chi(n)$ if $n$ is odd. 
In $\Omega_p$ we can still define an holomorphic determination of 
$$g_H(s)=\sum_{\chi\in X_p^-(H)}\log L(s,\chi')$$ 
such that 
$$\log L(s,\chi') 
=\sum_{\substack{n\geq 3\\n\text{ odd}}}\frac{\Lambda(n)}{\log n}\chi(n)n^{-s}
\text{ for }\Re(s)>1.$$
Hence, here we have 
$g_H(s) =g_{H,+1}(s)-g_{H,-1}(s),$ 
where 
$$g_{H,\varepsilon}(s)
=\frac{m}{2}\sum_{h\in H}\sum_{\substack{n\geq 3\text{ odd}\\ n\equiv\varepsilon h\pmod p}}
\frac{\Lambda(n)}{\log n}n^{-s}
\ \ \ \ \ \text{(for $\Re(s)>1$ and $\varepsilon\in\{-1,+1\}$).}$$
Since $\chi(2)=+1$, we have 
$L(s,\chi) =(1-2^{-s})^{-1}L(s,\chi')$. 
Therefore,
$L(1,\chi) =2L(1,\chi')$ 
and 
$$h_{\mathbb K}^-
=2\left (\frac{p}{\pi^2}\right )^{m/4}\prod_{\chi\in X_p^-(H)}L(1,\chi')
=2\left (\frac{p}{\pi^2}\exp\left (\frac{4}{m}g_H(1)\right )\right )^{m/4},$$
by \eqref{formulahrelK}.
Now we proceed with $g_H(s)$ as we did with $f_H(s)$ with the following modifications.
Since the characters $\chi'$ are modulo $2p$ the bound in Point 3 of subsection \ref{sectionboundfH(1)} 
must be changed to $\Re(g_H(s))\leq\frac{1}{2}m\log (2p)$ 
and we end up with the bound 
$\vert g_H(1)\vert\leq C_p+\log (4ep^4\log (2p))$, 
where $C_p$ is obtained by modifying the proof of Lemma \ref{boundfH(sigma)} in the following way:\\
In the range $3\leq n<p$ and $n$ odd 
we notice that $n\equiv h\pmod p$ for $h\in H=\{2^k;\ 0\leq k\leq d-1\}$ has no solution 
whereas $n\equiv -h\pmod p$ if and only if $n=p-2^k$ for some $k\in\{1,\cdots,d-1\}$.
Hence \eqref{step3} must be replaced by the much better bound
$$0
\leq\frac{m}{2}\sum_{h\in H}\sum_{\substack{2\leq n<p,\text{ $n$ odd}\\n\equiv\varepsilon h\pmod p}}
\frac{1}{n^\sigma}
\leq\frac{m}{2}\sum_{k=1}^{d-1}\frac{1}{p-2^k}
\leq\frac{md}{2(p-2^{d-1})}
=1
\ \ \ \ \ \text{(for $\sigma>1$ and $\varepsilon\in\{-1,+1\}$).}$$
In the range $p\leq n<2p$ and $n$ odd we 
have 
$$\sum_{h\in H}\sum_{\substack{p\leq n<2p,\text{ $n$ odd}\\n\equiv\varepsilon h\pmod p}}
\frac{1}{n^\sigma}
\leq\sum_{h\in H}\sum_{\substack{p\leq n<2p\\n\equiv\varepsilon h\pmod p}}
\frac{1}{n^\sigma}
\ \ \ \ \ \text{(for $\sigma>1$ and $\varepsilon\in\{-1,+1\}$).}$$
and we can use the bound \eqref{step2}.
In the same way, in the range $n\geq 2p$ and $n$ odd we can use the bound \eqref{step1}.
Hence, we end up with the bound 
$$0\leq g_{H,-1}(\sigma),\ g_{H,+1}(\sigma),\ \left\vert g_H(\sigma)\right\vert
\leq C_{p}-\log(\sigma-1),
\text{ where }
C_{p}
=\frac{5}{2} 
+\frac{3}{8}\log p .$$

\newpage
\section{Remarks on Theorem \ref{mainth1} }
An anonymous referee of a previous version of this paper 
allowed us to include here her/his proof of our Theorem \ref{mainth1} under the restriction \eqref{restriction3}
base on \cite{GS1} and \cite{Montgomery}. 
Her/his following proof is somewhat shorter, 
but uses more sophisticated tools
and gives less explicit results. 
Noticing that $\# X_p^-(H) =m/2$, 
by \eqref{formulahrelK} to obtain \eqref{asymptotichKminus} it suffices to prove that 
\begin{equation}\label{meanlogL1X}
\frac{1}{\# X_p^-(H)}\sum_{\chi\in X_p^-(H)}\log L(1,\chi) 
=o(1).
\end{equation} 

Now, by \cite{Montgomery} and \cite{GS1},
there exists a set ${\mathcal E}(p)$ of non-principal characters modulo $p$ 
of cardinal $\# {\mathcal E}(p)\ll p^{1/4}$ such that for all characters $\chi$ modulo $p$ 
such that for $\chi\not\in {\mathcal E}(p)$ we have 
\begin{equation}\label{logL1X}
\log L(1,\chi)
=\sum_{n\leq (\log p)^{100}}\frac{\Lambda (n)\chi(n)}{n\log n}
+O\left (\frac{1}{\log p}\right ).
\end{equation}

Indeed, on applying \cite[Theorem 1]{Montgomery} with $Q=p$, $T=p$ and $\sigma =1-1/40$, 
there are fewer than 
$(Q^2T)^{3(1-\sigma)}(\log Q)^{13} 
=p^{9/40}(\log p)^{13}\ll p^{1/4}$ 
primitive characters $\chi$ with conductor below $p$ 
for which the Dirichlet $L$-function $L(s,\chi)$ has a zero in the rectangle 
$1-1/40 =\sigma\leq\Re(s)\leq 1$ and $\vert\Im(s)\vert\leq p$. 
We then apply \cite[Lemma 8.2]{GS1}, 
to the characters $\chi\in X_p^-(H)$ 
not in this exceptional set, with the choice $s =\sigma=1$, $t=0$, $q=p$, $y=(\log p)^{100}$ 
and $\sigma_0=\alpha =1-1/40$
(notice that the rectangle $\sigma_0<\Re(z)\leq 1$ and $\vert\Im(z)-t\vert\leq y+3$ 
is included in the previous rectangle $1-1/40 =\alpha\leq\Re(s)\leq 1$ and $\vert\Im(s)\vert\leq p$ 
 $p$ large enough to have $y+3\leq p$). 
We have 
$$\sigma_1
=\min\left (\sigma_0+\frac{1}{\log y},\frac{\sigma+\sigma_0}{2}\right )
=\min\left (1-\frac{1}{40}+\frac{1}{100\log_2p},1-\frac{1}{80}\right )
=1-\frac{1}{40}+\frac{1}{100\log_2p}$$ 
(for $p$ large enough). 
Hence 
$$\frac{\log q}{(\sigma_1-\sigma_0)^2}y^{\sigma_1-\sigma}
=10^4\frac{(\log p)(\log_2p)^2}{(\log p)^{100(1-\sigma_1)}}
=10^4\frac{(\log p)(\log_2p)^2}{(\log p)^{5/2-\log_2p}}
=10^4\frac{(\log p)(\log_2p)^2}{(\log p)^{5/2}}\exp(1)
\ll\frac{1}{\log p}$$
and \eqref{logL1X} holds true 
for $\chi\in X_p^-(H)$ not in the previous exceptional set of cardinality $\ll p^{1/4}$. 

Now, for $\chi\in X_p^-(H)$ in this exceptional set we have explicit bounds 
$\frac{1}{\sqrt p}\ll L(1,\chi)\ll\log p$ 
if $\chi$ is quadratic 
(by the class number formula \eqref{formulahrelK} 
applied to the imaginary quadratic field ${\mathbb Q}(\sqrt{-p})$ 
and \cite{LouCRAS323}) 
and $\frac{1}{\log p}\ll\vert L(1,\chi)\vert \ll\log p$ otherwise 
(e.g. see \cite{LouActa62} and \cite{LouCRAS323}). 
Hence, 
noticing that for $x\geq 2$ we have
$$\left\vert\sum_{n\leq x}\frac{\Lambda (n)\chi(n)}{n\log n}\right\vert
\leq \sum_{n\leq x}\frac{1}{n}
\ll\log x,$$ 
we deduce that
\begin{equation}\label{logL1Xexceptional}
\log L(1,\chi)
=\sum_{n\leq (\log p)^{100}}\frac{\Lambda (n)\chi(n)}{n\log n}
+\begin{cases}
O(\log p )&\hbox{if $\chi\in X_p^-(H)$ is exceptional and quadratic}\\
O(\log_2 p )&\hbox{if $\chi\in X_p^-(H)$ is exceptional and not quadratic}
\end{cases}
\end{equation}

Using \eqref{logL1X}, \eqref{logL1Xexceptional} and the orthogonality relation \eqref{orthogonality}, 
and recalling that $\# X_p^-(H)=m/2=(p-1)/(2d)$
it follows that 
$$\frac{1}{\# X_p^-(H)}\sum_{\chi\in X_p^-(H)}\log L(1,\chi) 
=\sum_{n\leq (\log p)^{100}}
\frac{\Lambda (n)}{n\log n}\varepsilon_H(n)
+O\left (\frac{dp^{1/4}\log_2p}{p}\right )+O\left (\frac{1}{\log p}\right ).$$
Now we saw in subsection \ref{firstrange}
that the absolute value of the sum in this latter right hand side 
is less than or equal to $\# H/(p-1)^{1/d} =d/(p-1)^{1/d}$. 
Therefore \eqref{meanlogL1X} and hence \eqref{asymptotichKminus} 
hold true under the restriction $d\leq\frac{\log p}{\log\log p}$ assumed in \eqref{restriction3}.

\newpage
{\small
\bibliography{central}

}
\end{document}